\newtheorem{theorem}{Theorem}[section]
\newtheorem{lemma}[theorem]{Lemma}
\newtheorem{corollary}[theorem]{Corollary}
\theoremstyle{definition}
\newtheorem{definition}[theorem]{Definition}
\newtheorem{problem}[theorem]{Problem}
\newtheorem{question}[theorem]{Question}
\theoremstyle{remark}
\newtheorem{remark}[theorem]{Remark}
\theoremstyle{theorem}
\title[Geometrically simply connected 4-manifolds]{Geometrically simply connected 4-manifolds and stable cohomotopy Seiberg-Witten invariants}
\author[Kouichi Yasui]{Kouichi Yasui}
\date{August 7, 2018. \textit{Revised}: May 4, 2019.}
\subjclass[2010]{Primary~57R55, Secondary~57R65, 57R17}
\keywords{4-manifolds; handle decompositions; stable cohomotopy Seiberg-Witten invariants; symplectic structures}
\address{Department of Pure and Applied Mathematics, Graduate School of Information Science and Technology, Osaka University, 
1-5 Yamadaoka, Suita, Osaka 565-0871, Japan}
\email{kyasui@ist.osaka-u.ac.jp}
\begin{document}

\begin{abstract} We show that every positive definite closed 4-manifold with $b_2^+>1$ and without 1-handles has a vanishing stable cohomotopy Seiberg-Witten invariant, and thus admits no symplectic structure. We also show that every closed oriented 4-manifold with $b_2^+\not\equiv 1$ and $b_2^-\not\equiv 1\pmod{4}$ and without 1-handles admits no symplectic structure for at least one orientation of the manifold. In fact, relaxing the 1-handle condition, we prove these results under more general conditions which are much easier to verify. 
\end{abstract}

\maketitle

\section{Introduction}\label{sec:intro}
A compact connected 4-manifold is called \textit{geometrically simply connected}, if it admits a handle decomposition without 1-handles. The condition ``without 1-handles'' is equivalent to ``without 3-handles'' for a closed 4-manifold, as seen from dual decompositions. Clearly, every geometrically simply connected 4-manifold is simply connected, but the converse has been an open problem (\cite[Problem 4.18]{Kir78}). 
\begin{problem}\label{intro:problem:1-handle}
Is every simply connected closed smooth 4-manifold geometrically simply connected?
\end{problem}
This problem is closely related to the existence problem of \textit{exotic} (i.e.\ homeomorphic but not diffeomorphic) smooth structures on the two smallest 4-manifolds $S^4$ and $\mathbb{CP}^2$ (see \cite{Y08_AGT}), and many closed 4-manifolds were shown to be geometrically simply connected 
(e.g.\ \cite{H78}, \cite{M80}, \cite{AK80}, \cite{G91}, \cite{GS}). Furthermore, geometrically simply connected exotic smooth structures on the small 4-manifolds $\mathbb{CP}^2\#_n\overline{\mathbb{C}\mathbb{P}^2}$ $(6\leq n\leq 9$) were constructed by the author (\cite{Y08_AGT}, \cite{Y10}), and a long standing potential counterexample (\cite{HKK}) to Problem~\ref{intro:problem:1-handle} was disproved by Akbulut~\cite{A12} and independently by the author~\cite{Y08_JT}, but the problem remains unsolved. 

In this paper, we study gauge theoretical properties of geometrically simply connected closed 4-manifolds to reveal properties that hold for all simply connected closed 4-manifolds and/or to give potential methods for constructing counterexamples to Problem~\ref{intro:problem:1-handle}. Let us recall that a \textit{positive definite} 4-manifold is an oriented 4-manifold whose intersection form is positive definite. We first discuss the following question. 

\begin{question}\label{intro:question:definite} 
Does there exist a simply connected positive definite closed smooth 4-manifold that has a non-vanishing gauge theoretical invariant of smooth structures?
\end{question}
Any such 4-manifold would be an exotic $\#_n\mathbb{CP}^2$ for some $n$, but it has been an open problem whether $\#_n\mathbb{CP}^2$ admits an exotic smooth structure. Interestingly, Hom and Lidman~\cite{HL_JEMS} recently proved that, regarding the  Ozsv\'{a}th-Szab\'{o} 4-manifold invariant (with $\mathbb{Z}/2\mathbb{Z}$-coefficient) coming from Heegaard Floer homology, the answer to Question~\ref{intro:question:definite} is negative for geometrically simply connected 4-manifolds with $b_2^+>1$. 

\begin{remark}According to \cite{Zem} (see also \cite{JTZ}), the invariance of the Ozsv\'{a}th-Szab\'{o} 4-manifold invariant (\cite{OzSz_06}) is currently proved only for $\mathbb{Z}/2\mathbb{Z}$-coefficient.  This invariant is thus expected to be equivalent to the mod 2 version of the (ordinary) Seiberg-Witten invariant. We note that the Seiberg-Witten invariant is strictly stronger than its mod 2 version, that is, there exists an exotic pair of closed 4-manifolds that have distinct Seiberg-Witten invariants whose mod 2 versions are the same. Indeed, Fintushel-Stern's knot surgery~\cite{FS2} produces many such examples. 
\end{remark}

Here we answer Question~\ref{intro:question:definite} negatively for geometrically simply connected 4-manifolds with $b_2^+>1$, regarding the stable cohomotopy Seiberg-Witten invariant introduced by Bauer and Furuta~\cite{BF04}, which is strictly stronger than the Seiberg-Witten invariant (\cite{Bau04}). 

\begin{theorem}\label{intro:thm:definite:stable}Every geometrically simply connected positive definite closed smooth 4-manifold with $b_2^+>1$ has a vanishing stable cohomotopy Seiberg-Witten invariant. 
\end{theorem}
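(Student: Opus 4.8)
The plan is to analyze the Bauer--Furuta monopole map directly through its finite-dimensional approximation, and to show that the geometric hypothesis ``no $1$-handles'' forces the resulting $S^1$-equivariant stable map to be equivariantly null. First I would record the numerology forced by positive definiteness. Since geometric simple connectivity implies $\pi_1(X)=1$, Donaldson's theorem gives $Q_X\cong\langle 1\rangle^{\oplus n}$ with $n=b_2=b_2^+=\sigma$ and $b_2^-=b_1=0$. For any spin$^c$ structure $\mathfrak s$, writing $c_1(\mathfrak s)=(k_1,\dots,k_n)$ with all $k_i$ odd, one gets $c_1(\mathfrak s)^2=\sum k_i^2\ge n=\sigma$, hence the complex index of the Dirac operator $\operatorname{ind}_{\mathbb C}D_{\mathfrak s}=\tfrac18\bigl(c_1(\mathfrak s)^2-\sigma\bigr)\ge 0$. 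Thus the monopole map, after approximation, is an $S^1$-equivariant based map $f_{\mathfrak s}\colon V^+\to U^+$ whose virtual target-minus-source has trivial (real) part of dimension $b_2^+\ge2$ and free (complex) part governed by $\operatorname{ind}_{\mathbb C}D_{\mathfrak s}\ge0$. Restricting to $S^1$-fixed points gives a based map $S^0\to S^{b_2^+}$, which is automatically null since $b_2^+\ge1$; this already recovers the vanishing of the ordinary (and mod-$2$) Seiberg--Witten invariants.

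The heart of the matter is therefore to kill the \emph{free} part of $f_{\mathfrak s}$, namely the contribution that is invisible to ordinary Seiberg--Witten theory and which is genuinely nonzero for manifolds such as connected sums of elliptic surfaces. This is exactly where the absence of $1$-handles must enter. I would use the handle decomposition with only $0$-, $2$-, and $4$-handles to compare $X$ with the standard model $\#_n\mathbb{CP}^2$ --- not by a smooth connected-sum splitting, which is unavailable and is essentially the content of Problem~\ref{intro:problem:1-handle}, but through a geometric construction on the monopole map itself. Concretely, I would realize the $2$-handlebody $X\setminus(\text{$4$-handle})$, whose boundary is $S^3$, as the source of a gluing/family argument that factors $f_{\mathfrak s}$ through the approximated map attached to $\#_n\mathbb{CP}^2$. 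Since each $\mathbb{CP}^2$ summand contributes a null Bauer--Furuta factor (here $\operatorname{ind}_{\mathbb C}D=0$ against $b^+=1$), Bauer's multiplicativity under smashing would then force $f_{\mathfrak s}$, and hence the stable cohomotopy Seiberg--Witten invariant, to vanish.

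I expect the main obstacle to be precisely this comparison step: converting the handle-theoretic standardness of $X$ into an $S^1$-equivariant stable-homotopy-theoretic factorization \emph{without} smoothly splitting off $\mathbb{CP}^2$-summands. Equivalently, in the relevant range --- $b_2^+\ge2$ together with $\operatorname{ind}_{\mathbb C}D_{\mathfrak s}\ge0$ coming from positive definiteness --- one must show that the free $S^1$-equivariant stable map carries no essential class once the $2$-handle structure is imposed. The positivity $\operatorname{ind}_{\mathbb C}D_{\mathfrak s}\ge0$ and the inequality $b_2^+\ge2$ should keep the target equivariant cohomotopy group small enough that the handle-theoretic factorization suffices, but verifying the null-homotopy $S^1$-equivariantly, and checking it uniformly over all spin$^c$ structures $\mathfrak s$, is where the real work lies. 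Because the free part is what distinguishes the Bauer--Furuta invariant from ordinary Seiberg--Witten theory, controlling it --- and in doing so isolating the precise role of ``no $1$-handles'' --- is the crux of the argument.
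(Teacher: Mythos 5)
Your proposal has two genuine gaps, one of which is fatal. First, a factual error: you claim that the restriction of the approximated monopole map to $S^1$-fixed points being null-homotopic ``already recovers the vanishing of the ordinary (and mod-$2$) Seiberg--Witten invariants.'' This is false. The fixed-point part of the Bauer--Furuta map is inessential for \emph{every} closed 4-manifold with $b_2^+\geq 1$ (it is essentially a linear map with cokernel of dimension $b_2^+$); it carries no information about the Seiberg--Witten invariant, which is extracted from the free part of the equivariant map. The K3 surface has $b_2^+=3$ and Seiberg--Witten invariant $\pm 1$, so no fixed-point argument of this kind can prove vanishing. Second, and decisively: the entire content of your argument is the ``comparison step'' factoring the monopole map of $X$ through that of $\#_n\mathbb{CP}^2$ using only the handle structure, and you do not carry it out --- you explicitly defer it as ``where the real work lies.'' There is no known gluing technique that produces such a factorization from a handle decomposition alone: Bauer's smash-product formula requires an actual smooth connected-sum decomposition of $X$, which is exactly what is unavailable (producing one would essentially be a strong form of resolving Problem~\ref{intro:problem:1-handle}). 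As written, the proposal assumes the conclusion of the hardest step, so it is a research plan rather than a proof.

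For contrast, the paper's proof never analyzes the monopole map and needs no index computation or Donaldson's theorem. From the handle decomposition one extracts a single 2-handle neighborhood $W$ (attached along a knot $K$ with framing $n$) representing a non-torsion class $\alpha$. One then passes to the double $X\#\overline{X}$: sliding the handle $K$ over its mirror $\overline{K}$ exhibits the class $\alpha-\overline{\alpha}$ as the image of a 2-handle attached along the \emph{slice} knot $K\#\overline{K}$ with framing $0$, hence represented by a smoothly embedded sphere of square zero (Lemma~\ref{sec:proof:lem:slice}). If the stable cohomotopy invariant of $X$ were nonzero, then since $\overline{X}$ is negative definite, Bauer's connected-sum theorem (Theorem~\ref{sec:proof:thm:Bauer}(1)) would make the invariant of $X\#\overline{X}$ nonzero; but Fr\o yshov's theorem (Theorem~\ref{sec:proof:thm:Froyshov}), applied to the $S^2\times S^1$ boundary of a neighborhood of that sphere, forces it to vanish --- a contradiction. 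You might note that your instinct to exploit $\#_n\mathbb{CP}^2$ as a standard model is replaced in the paper by the orientation-reversed double, which is available \emph{unconditionally}; that substitution is precisely what makes the argument close.
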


It is likely that our proof works for the $b_2^+=1$ case as well, but we do not pursue this point here, since this invariant requires some care in the $b_2^+=1$ case (see \cite{Bau04s}). We note that our approach is very different from that of Hom and Lidman. It would be natural to ask whether this theorem holds without the condition ``geometrically''.  If not, there exists a counterexample to Problem~\ref{intro:problem:1-handle}. 

We obtain the following corollary. 

\begin{corollary}\label{intro:cor:definite:SW}Every geometrically simply connected positive definite closed smooth 4-manifold with $b_2^+>1$ has a vanishing Seiberg-Witten invariant. 
\end{corollary}
\begin{proof}By \cite[Proposition~3.3]{BF04} and \cite[Proposition~4.4]{Bau04s}, a closed oriented smooth 4-manifold with a non-vanishing Seiberg-Witten invariant has a non-vanishing stable cohomotopy Seiberg-Witten invariant. Hence this corollary follows from the above theorem. 
\end{proof}

We note that Hom and Lidman proved their vanishing result on the Ozsv\'{a}th-Szab\'{o} 4-manifold invariant by utilizing the knot filtration on Heegaard Floer chain complex and its relationship with Dehn surgery, but their argument does not work for the Seiberg-Witten invariant due to lack of the corresponding tools in Seiberg-Witten theory. By contrast, we can give a short proof of Corollary~\ref{intro:cor:definite:SW} relying only on classical results about the (ordinary) Seiberg-Witten invariant.

The above corollary implies the following two results, which were originally proved by Hom and Lidman~\cite{HL_JEMS} using the vanishing result on the Ozsv\'{a}th-Szab\'{o} invariant. 

\begin{corollary}\label{intro:cor:definite:symplectic}Every geometrically simply connected positive definite closed smooth 4-manifold with $b_2^+>1$ admits no symplectic structure. 
\end{corollary}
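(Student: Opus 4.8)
The plan is straightforward: I want to chain together the vanishing of the stable cohomotopy Seiberg-Witten invariant (Theorem~\ref{intro:thm:definite:stable}) with the well-known fact that closed symplectic 4-manifolds have nontrivial Seiberg-Witten-type invariants. The cleanest route is to invoke the analogue of Taubes's theorem in the Bauer-Furuta setting. Recall that Taubes proved that a closed symplectic 4-manifold with $b_2^+>1$ has a nonvanishing (ordinary) Seiberg-Witten invariant for the canonical \textrm{spin}$^c$ structure. Bauer (\cite{Bau04s}) established the corresponding statement at the level of the stable cohomotopy refinement, namely that such a manifold has a \emph{nonvanishing} stable cohomotopy Seiberg-Witten invariant. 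So the contrapositive argument runs: if a geometrically simply connected positive definite closed $4$-manifold $X$ with $b_2^+>1$ admitted a symplectic structure, then by Bauer's theorem its stable cohomotopy Seiberg-Witten invariant would be nonzero, contradicting Theorem~\ref{intro:thm:definite:stable}.

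First I would fix the setup carefully. A symplectic structure on $X$ comes with a compatible almost complex structure $J$, hence a canonical \textrm{spin}$^c$ structure $\mathfrak{s}_{\mathrm{can}}$ whose determinant line is $-K_X$, the anticanonical class. The relevant input is that the stable cohomotopy Seiberg-Witten invariant of $(X,\mathfrak{s}_{\mathrm{can}})$ is nonzero whenever $b_2^+(X)>1$; this is exactly the refinement Bauer records alongside the proof that his invariant dominates the ordinary one. Since Theorem~\ref{intro:thm:definite:stable} asserts that \emph{all} stable cohomotopy Seiberg-Witten invariants of $X$ vanish (the invariant being organized over all \textrm{spin}$^c$ structures), the two statements are incompatible.

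The logically cleanest presentation, and the one I would actually write, avoids even invoking the symplectic Bauer-Furuta computation directly: instead I would factor through Corollary~\ref{intro:cor:definite:SW}. Taubes's classical theorem already gives that a closed symplectic $4$-manifold with $b_2^+>1$ has a nonvanishing \emph{ordinary} Seiberg-Witten invariant for the canonical \textrm{spin}$^c$ structure. But Corollary~\ref{intro:cor:definite:SW} says every geometrically simply connected positive definite closed $4$-manifold with $b_2^+>1$ has all Seiberg-Witten invariants equal to zero. Hence such a manifold carries no symplectic structure. This is the shortest honest proof, and it keeps the symplectic obstruction at the level of the ordinary invariant where Taubes's result is standard and requires no additional citation beyond Taubes \cite{T94}.

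I do not anticipate a genuine obstacle here; the content has all been front-loaded into Theorem~\ref{intro:thm:definite:stable} and its corollary. The only point requiring minor care is making sure the orientation and positivity conventions match: a symplectic manifold has $b_2^+\ge 1$ automatically (the symplectic form is a self-dual harmonic representative), and the positive-definiteness of the intersection form together with $b_2^+>1$ is precisely the hypothesis feeding Corollary~\ref{intro:cor:definite:SW}, so no compatibility issue arises. I would therefore present the proof in two sentences: cite Taubes for the nonvanishing of the Seiberg-Witten invariant of a symplectic $4$-manifold with $b_2^+>1$, then invoke Corollary~\ref{intro:cor:definite:SW} to derive a contradiction, concluding that no symplectic structure exists.
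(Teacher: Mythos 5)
Your proof is correct and coincides with the paper's own argument: both deduce the result by combining Taubes's nonvanishing theorem for the Seiberg--Witten invariant of symplectic 4-manifolds with $b_2^+>1$ (\cite{Tau94}) with the vanishing statement of Corollary~\ref{intro:cor:definite:SW}. The alternative route you sketch through the symplectic nonvanishing of the Bauer--Furuta invariant would also work, but your preferred two-sentence version is exactly what the paper does.
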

\begin{proof}By a result of Taubes~\cite{Tau94}, the Seiberg-Witten invariant of a closed oriented symplectic 4-manifold with $b_2^+>1$ does not vanish (even for the mod 2 version). Hence the claim follows from Corollary~\ref{intro:cor:definite:SW}. 
\end{proof}
\begin{corollary}\label{intro:cor:definite:1- and 3-handles}If a simply connected positive definite closed symplectic 4-manifold admits a handle decomposition without 1- and 3-handles, then the 4-manifold is diffeomorphic to $\mathbb{CP}^2$. 
\end{corollary}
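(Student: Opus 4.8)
The plan is to first pin down the intersection form using the symplectic hypothesis together with Corollary~\ref{intro:cor:definite:symplectic}, and then read the smooth structure directly off the constrained handle decomposition. Denote the manifold by $X$. Since $X$ is positive definite, $b_2^-=0$ and $b_2^+=b_2$. Because $X$ is symplectic, the symplectic form gives a second cohomology class of positive square, so $b_2^+\geq 1$. On the other hand, $X$ is geometrically simply connected (having no 1-handles), so Corollary~\ref{intro:cor:definite:symplectic} forbids $b_2^+>1$. Hence $b_2^+=b_2=1$, and the intersection form, being unimodular, positive definite, and of rank one, must be $\langle 1\rangle$.

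Next I would extract the handle data. With neither 1- nor 3-handles present, connectedness and its Poincar\'e dual force a single 0-handle and a single 4-handle, since 2-handles are attached along connected circles and hence cannot join distinct components. As there are also no 1- or 3-handles, $H_2(X)$ is free of rank equal to the number of 2-handles, so $b_2=1$ means that $X$ is obtained from $B^4$ by attaching exactly one 2-handle along a framed knot $(K,n)$ and then a single 4-handle. The self-intersection of the resulting generator of $H_2(X)$ is the framing $n$, so the intersection form is $\langle n\rangle$; comparing with $\langle 1\rangle$ gives $n=1$.

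The final step is to identify $K$. The boundary of $B^4$ together with the $1$-framed 2-handle is the $1$-surgery $S^3_1(K)$, and the 4-handle can be attached precisely because this boundary is $S^3$; that is, $S^3_1(K)=S^3$. By the Gordon--Luecke theorem, a knot is determined by its complement, so no nontrivial surgery on a nontrivial knot can yield $S^3$; therefore $K$ must be the unknot. Attaching a $1$-framed 2-handle along the unknot to $B^4$ produces $\mathbb{CP}^2\setminus B^4$, and capping off with the 4-handle yields $X\cong\mathbb{CP}^2$, as desired.

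I expect the only genuine input beyond bookkeeping to lie in this last step: translating ``closed 4-manifold built from one 0-, one 2-, and one 4-handle'' into the surgery statement $S^3_1(K)=S^3$ and then invoking Gordon--Luecke to eliminate every knot but the unknot. The homological reductions that force $b_2=1$ and $n=1$ are routine once Corollary~\ref{intro:cor:definite:symplectic} is available.
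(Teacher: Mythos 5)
Your proposal is correct, and its first half is exactly the paper's argument: both use Corollary~\ref{intro:cor:definite:symplectic} (plus $[\omega]^2>0$ and positive definiteness) to force $b_2^+=b_2=1$, hence intersection form $\langle 1\rangle$. Where you diverge is the identification step. The paper treats this as a black box, citing the fact that a closed 4-manifold with $b_2=1$ and a handle decomposition without 1- and 3-handles is diffeomorphic to $\mathbb{CP}^2$ or $\overline{\mathbb{C}\mathbb{P}^2}$ (\cite[Proposition~6.4]{Y08_AGT}), and then positive definiteness selects $\mathbb{CP}^2$. You instead re-derive this inline: a single 0- and 4-handle by connectedness and duality, exactly one 2-handle since $H_2$ is freely generated by 2-handles, framing $n=1$ from the form $\langle 1\rangle$, the surgery equation $S^3_1(K)=S^3$ from the existence of the 4-handle, and then the Gordon--Luecke knot complement theorem to conclude $K$ is unknotted, so $X\cong\mathbb{CP}^2$ (with Cerf's theorem implicitly handling uniqueness of the 4-handle attachment). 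This is, in essence, the proof of the proposition the paper cites, so the mathematical content coincides; what your version buys is self-containedness and an explicit display of the one deep topological input (Gordon--Luecke), while the paper's citation is shorter and covers the slightly more general $b_2=1$ statement including the $\overline{\mathbb{C}\mathbb{P}^2}$ case. No gaps.
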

\begin{proof} Corollary~\ref{intro:cor:definite:symplectic} shows that a 4-manifold satisfying the assumption has $b_2^+=b_2=1$. The claim thus follows from the fact that a closed oriented smooth 4-manifold with $b_2=1$ having a handle decomposition without 1- and 3-handles is diffeomorphic to either $\mathbb{CP}^2$ or $\overline{\mathbb{C}\mathbb{P}^2}$ (see \cite[Proposition~6.4]{Y08_AGT}).
\end{proof}

We next discuss the following question. 
\begin{question}\label{intro:question:both ori}
Does there exist a simply connected closed oriented smooth 4-manifold with $b_2^+>1$ that admits symplectic structures for both orientations of the manifold?
\end{question}
We note that the answer to this question is affirmative, if either the condition ``simply connected'' or ``$b_2^+>1$'' is removed (e.g.\ $T^4$ and $S^2\times S^2$). 
A similar question for complex structures was intensively studied (\cite{Be85}, \cite{Ko92}, \cite{Ko97}), and several results of Kotschick (\cite{Ko92}, \cite{Ko97}) works for our question as well. For example, if a simply connected closed oriented 4-manifold with $b_2^+>1$ and $b_2^{-}>1$ admits symplectic structures for both orientations, then the 4-manifold does not contain a smoothly embedded 2-sphere representing a non-trivial second homology class, and both $b_2^+$ and $b_2^-$ are odd integers. Here we answer the question negatively for geometrically simply connected 4-manifolds with a mild condition on $b_2^+$ and $b_2^-$, giving a potential approach to Problem~\ref{intro:problem:1-handle}.

\begin{theorem}\label{intro:thm:reverse}Every geometrically simply connected closed oriented smooth 4-manifold with $b_2^+\not\equiv 1$ and $b_2^-\not\equiv 1\pmod{4}$ admits no symplectic structure for at least one orientation of the manifold. 
\end{theorem}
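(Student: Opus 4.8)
The plan is to argue by contradiction: assume that \emph{both} orientations of $X$ carry symplectic structures, and combine the Taubes nonvanishing theorem with a vanishing theorem for the stable cohomotopy Seiberg--Witten invariant of geometrically simply connected manifolds. First I record two reductions. Since $X$ admits a handle decomposition without 1-handles, the same decomposition exhibits the reversed orientation $\bar X$ without 1-handles, so $\bar X$ is geometrically simply connected as well; the situation is thus symmetric in the two orientations, and in particular $X$ is simply connected so $b_1=0$. Next, for any simply connected closed symplectic 4-manifold the quantity $(\chi+\sigma)/4=(1+b_2^+)/2$ is an integer (it is the index of the $\mathrm{Spin}^c$ Dirac operator for the canonical structure), whence $b_2^+$ is odd. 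Applying this to $X$ and to $\bar X$ (for which $b_2^+(\bar X)=b_2^-(X)$) shows that, under the contradiction hypothesis, both $b_2^+$ and $b_2^-$ are odd; together with $b_2^+\not\equiv1$ and $b_2^-\not\equiv1\pmod4$ this forces
\[ b_2^+(X)\equiv 3 \quad\text{and}\quad b_2^-(X)\equiv 3 \pmod 4 , \]
so in particular $b_2^+(X)\geq 3>1$ and $b_2^-(X)=b_2^+(\bar X)\geq 3>1$.

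By Taubes' theorem, each of $X$ and $\bar X$, being symplectic with $b_2^+>1$, has a nonvanishing (even mod-2) Seiberg--Witten invariant for its canonical $\mathrm{Spin}^c$ structure, and hence, by \cite[Proposition~3.3]{BF04} and \cite[Proposition~4.4]{Bau04s}, a nonvanishing stable cohomotopy Seiberg--Witten invariant. The heart of the argument is therefore to prove the opposite: a geometrically simply connected closed 4-manifold with $b_2^+\equiv b_2^-\equiv 3\pmod 4$ must have a \emph{vanishing} stable cohomotopy invariant. This is the indefinite analogue of Theorem~\ref{intro:thm:definite:stable}, and I would establish it in the same spirit, exploiting the absence of 1-handles; it contradicts the previous sentence and so shows that at most one orientation can be symplectic.

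For the vanishing I would work directly from the handle structure. Writing $X=h^0\cup(\text{2-handles})\cup h^4$, the framed-link description of the 2-handlebody $X\setminus h^4$ lets one locate embedded surfaces realizing the intersection form and, after handle slides and standard sphere-creating moves, split off enough $S^2\times S^2$ or $\mathbb{CP}^2\#\overline{\mathbb{CP}^2}$ summands to reduce the Bauer--Furuta class, via Bauer's connected-sum (smash) formula, to a product of the canonical classes of the pieces. Each symplectic-type factor contributes, non-equivariantly, the class $\eta$ in degree one (consistent with the index count $2\,\mathrm{ind}_{\mathbb C}D-b_2^+=1$ for the canonical $\mathrm{Spin}^c$ structure), and since $\eta^{4}=0$ in the stable stems, accumulating four or more such factors annihilates the invariant. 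The $\bmod\,4$ hypotheses on $b_2^{\pm}$ are precisely what should guarantee that the splitting produces a number of factors divisible by four, equivalently that the relevant $\mathrm{Pin}(2)$-equivariant target sphere lies in the trivial range of the equivariant stable stems.

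The main obstacle is exactly this last step: making the passage from ``no 1-handles'' to a genuine geometric splitting (or to a Furuta-type $\mathrm{Pin}(2)$-equivariant computation) rigorous, and verifying that the residues $b_2^\pm\equiv 3\pmod 4$ land the Bauer--Furuta class in the vanishing range rather than in the range where $\eta^3\neq 0$ survives. I expect this to require care with the equivariant indices and with the orientations and signs in the connected-sum formula, and to be where the bulk of the work lies; the reductions via Taubes and the parity of $b_2^+$ are routine by comparison. Finally, the degenerate cases $b_2^+(X)=0$ or $b_2^-(X)=0$ (definite orientations, excluded from $b_2^{\pm}>1$) are dispatched separately: such an orientation is either non-symplectic outright, when its $b_2^+$ vanishes, or else falls under Theorem~\ref{intro:thm:definite:stable}.
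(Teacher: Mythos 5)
Your opening reductions (symmetry of the handle condition under orientation reversal, parity of $b_2^+$ for symplectic manifolds, Taubes plus \cite{BF04}, \cite{Bau04s} giving a non-vanishing Bauer--Furuta invariant for each orientation) are correct and agree with the first steps of the paper's proof of Theorem~\ref{sec:proof:thm:reverse}. But the statement you then declare to be the heart of the argument --- that every geometrically simply connected closed 4-manifold with $b_2^+\equiv b_2^-\equiv 3\pmod 4$ has \emph{vanishing} stable cohomotopy Seiberg--Witten invariant --- is false, so no amount of work on the handle-splitting step can complete your proof. The $K3$ surface is a counterexample: it is geometrically simply connected (indeed it has a handle decomposition without 1- and 3-handles), it has $b_2^+=3$ and $b_2^-=19$, both congruent to $3\pmod 4$, and being K\"{a}hler with $b_2^+>1$ it has non-vanishing Seiberg--Witten invariant by \cite{Tau94}, hence non-vanishing Bauer--Furuta invariant. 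Note that your proposed vanishing theorem makes no use of the hypothesis that \emph{both} orientations are symplectic, so it cannot be rescued in the form stated; relatedly, the geometric splitting you hope for (peeling off $S^2\times S^2$ or $\mathbb{CP}^2\#\overline{\mathbb{C}\mathbb{P}^2}$ summands from a 1-handle-free decomposition) would force manifolds like $K3$ to dissolve into standard pieces, which is known to be false.

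The paper's proof turns the logic around: the absence of 1-handles is used not to kill the invariant of $X$, but to produce an embedded sphere in the connected sum $X\#\overline{X}$. Concretely, a non-torsion class $\alpha$ represented by a 2-handle neighborhood (attached along an $n$-framed knot $K$) yields, after a handle slide in $W\natural\overline{W}\subset X\#\overline{X}$, a 2-handle attached along the slice knot $K\#\overline{K}$ with framing $0$; hence $\alpha-\overline{\alpha}$ is represented by a smoothly embedded sphere of square zero (Lemma~\ref{sec:proof:lem:slice}). On the other hand, under your contradiction hypothesis both summands are symplectic with $b_2^+\equiv 3\pmod 4$ and $b_1=0$, so Bauer's connected-sum theorem (Theorem~\ref{sec:proof:thm:Bauer}, case (2) with $n=2$) shows the Bauer--Furuta invariant of $X\#\overline{X}$ is \emph{non-vanishing}. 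This is incompatible with the embedded sphere by the adjunction-type vanishing result (Theorem~\ref{sec:proof:thm:non-negative}, deduced from Fr\o yshov's theorem via the positive-scalar-curvature boundary $S^2\times S^1$ of the sphere's tubular neighborhood), and that incompatibility is the contradiction. So the missing idea in your proposal is precisely the passage to $X\#\overline{X}$ together with the slice-knot sphere and the adjunction inequality; with those in hand, the argument closes without ever proving (the false) vanishing statement for $X$ itself.
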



In fact, we prove our main results under more general conditions, relaxing the geometrically simply connected condition. These conditions are much easier to verify, and furthermore many closed 4-manifolds including non-simply connected ones satisfy these conditions. See Theorems~\ref{sec:proof:thm:definite}, \ref{sec:proof:thm:reverse} and Corollary~\ref{sec:proof:cor:definite:SW}. 

\section{Proof}\label{sec:proof}
We introduce the following definition to prove our main results. 

\begin{definition}
Let $X$ be an oriented smooth 4-manifold, and let $\alpha$ be a class of $H_2(X;\mathbb{Z})$. We say that $\alpha$ is represented by a \textit{2-handle neighborhood}, if $X$ has a codimension zero submanifold $W$ satisfying the following conditions. 
\begin{itemize}
 \item The submanifold $W$ is diffeomorphic to a 4-manifold obtained from the 4-ball by attaching a single 2-handle. (This submanifold will be called a 2-handle neighborhood.) 
 \item $\alpha$ is the image of a generator of $H_2(W;\mathbb{Z})\cong \mathbb{Z}$ by the inclusion induced homomorphism $H_2(W;\mathbb{Z})\to H_2(X;\mathbb{Z})$. 
\end{itemize}
\end{definition}

\begin{remark}According to \cite[Section 1]{LV02}, a second homology class $\alpha$ of a compact oriented smooth 4-manifold $X$ is represented by a 2-handle neighborhood, if and only if $\alpha$ is represented by a PL embedded 2-sphere in $X$. 
\end{remark}

For an oriented 4-manifold $X$, let $\overline{X}$ denote the 4-manifold $X$ equipped with the reverse orientation. We prove the following theorems. 

\begin{theorem}\label{sec:proof:thm:definite}If a closed connected positive definite smooth 4-manifold $X$ with $b_2^+>1$ admits a non-torsion second homology class represented by a 2-handle neighborhood, then the stable cohomotopy Seiberg-Witten invariant of $X$ vanishes. 
\end{theorem}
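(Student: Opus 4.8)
The plan is to exploit the contrast between the two invariants: positive-definiteness already forces the \emph{ordinary} Seiberg--Witten invariant to vanish for essentially dimension-counting reasons, so the real content is that the \emph{stable cohomotopy} refinement requires one to kill an equivariant stable homotopy class, and the 2-handle neighborhood is precisely the geometric input that makes this possible. First I would record the elementary consequences of the hypotheses. Since the intersection form is positive definite and $\alpha$ is non-torsion, one has $n:=\alpha\cdot\alpha>0$; moreover Donaldson's diagonalization theorem (valid for any closed oriented smooth 4-manifold with definite form) identifies the form with $\langle 1\rangle^{\oplus b_2}$, so that every characteristic vector $c$ satisfies $c\cdot c\ge b_2$. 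The 2-handle neighborhood $W=B^4\cup h^2$ representing $\alpha$ is a codimension-zero submanifold whose boundary $Y=\partial W$ is a rational homology sphere (it is $n$-surgery on the attaching knot, and $n\neq 0$), and it carries a smooth surface representing $\alpha$ of square $n$.

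Next I would set up the Bauer--Furuta invariant via finite-dimensional approximation, presenting it as the stable class of an $S^1$-equivariant map $f\colon S^{V}\to S^{V'}$ (indeed $\mathrm{Pin}(2)$-equivariant in the spin case), where the $S^1$-fixed subspaces arise from the self-dual harmonic forms and the free part from the spinors and the Dirac operator. The key structural observation is that, because $b_2^+>0$, the induced map on $S^1$-fixed point spheres has positive-codimensional image and is therefore null-homotopic; the entire content of the theorem is thus to promote this null-homotopy on the fixed locus to an $S^1$-equivariant null-homotopy of $f$ over the free part. This is exactly the step where the refinement can fail to vanish in general, as Bauer's connected-sum examples show, so the positive-definite and embedded-sphere hypotheses must be used precisely here.

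The heart of the argument is to feed the 2-handle neighborhood into Bauer--Furuta's gluing formalism. Decomposing $X=W\cup_Y Z$ and using the relative stable cohomotopy invariants of $W$ and of the complement $Z$, I would show that the contribution of the positive-square handle piece $W$ forces the free part of the glued map to be equivariantly trivial, so that $f$ is equivariantly null-homotopic and the invariant vanishes. The simplicity of $W$ (a single $2$-handle, hence a perfect relative Morse function and a rational homology sphere boundary) is what should make its relative invariant computable and the gluing tractable.

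I expect the main obstacle to be this final step, for two reasons. First, the sphere produced by a 2-handle neighborhood is only PL-embedded: if the attaching knot is knotted, the smooth surface representing $\alpha$ inside $W$ has positive genus, so one cannot simply invoke the smooth adjunction inequality or a smooth $\mathbb{CP}^2$-connected-sum splitting, and the argument must be run using only the handle structure and the control it provides on $Y$ and on the relative invariant of $W$. Second, one must ensure that the factored class genuinely lands in a trivial equivariant stable cohomotopy group rather than merely having vanishing ordinary degree; controlling the free $S^1$-part---equivalently the Dirac-index contribution, bounded using $c\cdot c\ge b_2$ together with $b_2^+>1$---is the crux. A secondary technical point is the gluing analysis along the rational homology sphere $Y$, which requires the relative Bauer--Furuta invariants to be set up with the correct equivariant suspension indices.
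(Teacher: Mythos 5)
The heart of your argument---that the relative Bauer--Furuta invariant of the $2$-handle neighborhood $W$, inserted into a gluing formula along $Y=\partial W$, ``forces the free part of the glued map to be equivariantly trivial''---is not an argument but a restatement of what needs to be proved, and in the form you state it, the mechanism is false. As written, your vanishing mechanism uses only data localized at $W$: a positively framed $2$-handle neighborhood representing a non-torsion class. But the $K3$ surface is geometrically simply connected \cite{H78}, so by Lemma~\ref{sec:question:lem:geometrically} every one of its second homology classes---in particular any class of positive square---is represented by a $2$-handle neighborhood; yet $K3$ has non-vanishing Seiberg--Witten invariant and hence non-vanishing stable cohomotopy invariant. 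So no gluing argument whose input is only $W$ and the sign of the framing can conclude vanishing: positive definiteness of all of $X$ must enter the homotopy-theoretic step in an essential way, and your sketch never says how---the remark that Donaldson diagonalization gives $c\cdot c\geq b_2$ and ``controls the Dirac-index contribution'' is not tied to the gluing in any concrete manner. A further warning sign is your opening claim that positive definiteness makes the ordinary Seiberg--Witten invariant vanish ``for essentially dimension-counting reasons'': this is wrong (for a definite form of small odd rank with $b_1=0$, characteristic vectors such as $(5,1,\dots,1)$ give formal moduli dimensions that are non-negative and even), and indeed such vanishing is precisely the non-trivial content of Corollary~\ref{sec:proof:cor:definite:SW} and of Hom--Lidman \cite{HL_JEMS}, not something one may assume at the outset. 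Finally, even granting the strategy, gluing along $Y=S^3_n(K)$ for an arbitrary knot $K$ requires knowing the Seiberg--Witten Floer spectrum of an arbitrary surgery on an arbitrary knot, which is not computable in the generality you need; the relative-invariant approach in the literature \cite{KLS} works along $S^2\times S^1$ (the boundary of a neighborhood of a smooth square-zero sphere), not along general surgeries.

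The paper's proof sidesteps all of this with a trick your proposal is missing: it never decomposes $X$ at all. Instead it forms $X\#\overline{X}$, where the $n$-framed handle along $K$ can be slid over the $(-n)$-framed mirror handle along $\overline{K}$ to produce a $0$-framed $2$-handle attached along the slice knot $K\#\overline{K}$; hence the class $\alpha-\overline{\alpha}$ is represented by a \emph{smoothly} embedded sphere of square zero (Lemma~\ref{sec:proof:lem:slice}). This is where the $2$-handle hypothesis is spent, converting a PL sphere in $X$ into a genuinely smooth sphere in the double---exactly the difficulty you flagged about the PL/smooth distinction, resolved by sliceness rather than by gluing. Positive definiteness is spent elsewhere: $\overline{X}$ is negative definite, so Bauer's connected-sum theorem (Theorem~\ref{sec:proof:thm:Bauer}(1)) shows that non-vanishing of the invariant of $X$ would force non-vanishing for $X\#\overline{X}$, while Fr\o yshov's theorem, via Theorem~\ref{sec:proof:thm:non-negative} and the positive scalar curvature of $S^2\times S^1$, forbids a manifold with non-vanishing invariant and $b_2^+>1$ from containing the smooth square-zero sphere just constructed. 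The resulting contradiction proves the theorem using only these black boxes, with no finite-dimensional approximation, relative invariants, or gluing analysis.
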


\begin{theorem}\label{sec:proof:thm:reverse}If a closed connected oriented smooth 4-manifold $X$ satisfying $b_2^+\not\equiv 1$, $b_2^-\not\equiv 1\pmod{4}$ and $b_1=0$ admits a non-torsion second homology class represented by a 2-handle neighborhood, then at least one of $X$ and $\overline{X}$ does not admit a symplectic structure. 
\end{theorem}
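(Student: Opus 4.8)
The plan is to reduce the statement to a single orientation and then to a vanishing theorem for the stable cohomotopy Seiberg-Witten invariant. First I would observe that the hypotheses are symmetric under orientation reversal: replacing $X$ by $\overline{X}$ interchanges $b_2^+$ and $b_2^-$, preserves $b_1=0$, sends a $2$-handle neighborhood to a $2$-handle neighborhood, and negates the self-intersection $\alpha^2$ of the class $\alpha$ it represents. Since the conclusion is itself symmetric in $X$ and $\overline{X}$, and since \emph{both} $b_2^+\not\equiv 1$ and $b_2^-\not\equiv 1\pmod 4$ are assumed, I may rename the two oriented manifolds so that the working manifold $X$ satisfies $\alpha^2=n\ge 0$ together with $b_2^+\not\equiv 1\pmod 4$; it then suffices to prove that this $X$ admits no symplectic structure.

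I would then suppose, for contradiction, that $X$ is symplectic. A symplectic $4$-manifold has $b_2^+\ge 1$, and $b_2^+\not\equiv 1\pmod 4$ excludes $b_2^+=1$, so in fact $b_2^+\ge 2$. By Taubes' theorem \cite{Tau94} the Seiberg-Witten invariant of $X$ for its canonical $\mathrm{Spin}^c$ structure is non-zero, and by \cite[Proposition~3.3]{BF04} and \cite[Proposition~4.4]{Bau04s} this forces the stable cohomotopy Seiberg-Witten invariant of $X$ to be non-zero as well.

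The core of the proof is then a vanishing lemma that is the indefinite counterpart of Theorem~\ref{sec:proof:thm:definite}: a closed oriented $4$-manifold with $b_1=0$, $b_2^+>1$ and $b_2^+\not\equiv 1\pmod 4$ that contains a non-torsion class represented by a $2$-handle neighborhood $W$ with $\alpha^2\ge 0$ has vanishing stable cohomotopy Seiberg-Witten invariant. To establish this I would split $X=W\cup_Y N$ along $Y=\partial W$ and analyze the Bauer-Furuta map through the gluing formalism; the condition $b_1=0$ guarantees that this map is a $\mathrm{Pin}(2)$-equivariant stable map between representation spheres of the expected dimensions, and the ``positive'' piece $W$ with $\alpha^2\ge 0$ should factor it through a subsphere whose dimensions are incompatible with a non-null map. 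Applying this lemma to $X$ and combining it with the previous paragraph produces the desired contradiction.

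The hard part is exactly this vanishing lemma. Because $\alpha$ is represented only by a PL --- hence possibly singular --- $2$-sphere, the ordinary adjunction inequality, which would at once annihilate the Seiberg-Witten invariant of a manifold carrying a \emph{smooth} sphere of non-negative square in a non-torsion class, is unavailable, and one is forced to argue at the refined Bauer-Furuta level. The delicate step is the equivariant stable-homotopy computation showing that the assembled map is null-homotopic; here the congruence $b_2^+\not\equiv 1\pmod 4$ is indispensable, being precisely the condition that kills the relevant obstruction in $\mathrm{Pin}(2)$-equivariant stable cohomotopy --- the role played, in the positive definite setting of Theorem~\ref{sec:proof:thm:definite}, by the vanishing of $b_2^-$.
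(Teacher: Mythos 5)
Your argument has a fatal gap: the ``core vanishing lemma'' on which everything rests is false. The lemma asserts that a closed oriented $4$-manifold with $b_1=0$, $b_2^+>1$, $b_2^+\not\equiv 1\pmod{4}$, containing a non-torsion class of non-negative square represented by a $2$-handle neighborhood, has vanishing stable cohomotopy Seiberg-Witten invariant. The $K3$ surface is a counterexample: it has $b_1=0$ and $b_2^+=3\not\equiv 1\pmod{4}$, its fiber class is non-torsion of square zero and is represented by a $2$-handle neighborhood (the $2$-handle attached along the $0$-framed trefoil inside the nucleus $N(2)$), and yet $K3$ is symplectic, so by Taubes and the refinement property its stable cohomotopy invariant is non-zero. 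This also shows that no congruence on $b_2^+$ can ``kill the obstruction'': a $2$-handle neighborhood yields only a PL sphere, and no adjunction-type vanishing --- refined or not --- can hold for PL spheres of non-negative square. For the same reason your opening reduction is unrepairable: you claim it suffices to show that the orientation with $\alpha^2\geq 0$ and $b_2^+\not\equiv 1\pmod{4}$ is non-symplectic, but that stronger statement is false ($K3$ again), so any single-orientation strategy must fail. The theorem is genuinely a statement about the pair of orientations.

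The missing idea is to play the two orientations against each other on the connected sum $X\#\overline{X}$. The paper argues as follows. If $b_2^+$ or $b_2^-$ is even, the corresponding orientation is non-symplectic simply because $b_2^+-b_1$ of a closed symplectic $4$-manifold is odd. In the remaining case $b_2^+\equiv b_2^-\equiv 3\pmod{4}$, suppose both $X$ and $\overline{X}$ are symplectic. Taubes gives odd Seiberg-Witten invariants for the canonical $spin^c$ structures, so Bauer's connected-sum non-vanishing theorem (Theorem~\ref{sec:proof:thm:Bauer}(2) with $n=2$ --- this is exactly where the mod $4$ congruences are used, not as an obstruction-killing condition but as a hypothesis for \emph{non-vanishing}) shows that the stable cohomotopy invariant of $X\#\overline{X}$ is non-zero. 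On the other hand, Lemma~\ref{sec:proof:lem:slice} converts the PL data into smooth data: if the $2$-handle neighborhood $W$ is attached along an $n$-framed knot $K$, then sliding handles in $W\natural\overline{W}\subset X\#\overline{X}$ produces a $2$-handle attached along the $0$-framed slice knot $K\#\overline{K}$, so the non-torsion class $\alpha-\overline{\alpha}$ is represented by a \emph{smoothly} embedded sphere of square zero. This contradicts Theorem~\ref{sec:proof:thm:non-negative} (Fr\o yshov's vanishing theorem applied to the $S^2\times S^1$ boundary of the sphere's neighborhood), which is the same contradiction used in Theorem~\ref{sec:proof:thm:definite}. It is this passage to $X\#\overline{X}$, together with the slice-knot trick, that your proposal lacks and that no equivariant gluing computation on $X$ alone can replace.
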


As we will see, Theorems~\ref{intro:thm:definite:stable} and \ref{intro:thm:reverse} easily follow from these theorems. We note that many closed 4-manifolds including non-simply connected ones admit non-torsion second homology classes represented by 2-handle neighborhoods (see \cite{GS}), and clearly this condition is  much easier to verify than the geometrically simply connected condition.  In fact, it is often not necessary to construct a handle decomposition of an entire 4-manifold. For example, there are many closed minimal symplectic 4-manifolds that contain cusp neighborhoods representing non-torsion classes and thus admit desired second homology classes (e.g.\ \cite{P07}). 

Theorem~\ref{sec:proof:thm:definite} implies the following corollary, as seen from the proofs of Corollaries~\ref{intro:cor:definite:SW} and \ref{intro:cor:definite:symplectic}. 

\begin{corollary}\label{sec:proof:cor:definite:SW}If a closed connected positive definite smooth 4-manifold $X$ with $b_2^+>1$ admits a non-torsion second homology class represented by a 2-handle neighborhood, then the Seiberg-Witten invariant of $X$ vanishes. Consequently, $X$ does not admit any symplectic structure. 
\end{corollary}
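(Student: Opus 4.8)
The plan is to obtain both assertions as formal consequences of Theorem~\ref{sec:proof:thm:definite}, exactly along the lines already used for Corollaries~\ref{intro:cor:definite:SW} and \ref{intro:cor:definite:symplectic}. Since $X$ is a closed connected positive definite 4-manifold with $b_2^+>1$ admitting a non-torsion class represented by a 2-handle neighborhood, it satisfies the hypotheses of Theorem~\ref{sec:proof:thm:definite}, and hence its stable cohomotopy Seiberg-Witten invariant vanishes. The remaining work is purely to transfer this vanishing to the ordinary Seiberg-Witten invariant and then to rule out symplectic structures, so all the analytic content resides in the theorem itself.

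For the first assertion, I would invoke the comparison between the two invariants: by \cite[Proposition~3.3]{BF04} together with \cite[Proposition~4.4]{Bau04s}, a closed oriented smooth 4-manifold with a non-vanishing Seiberg-Witten invariant necessarily has a non-vanishing stable cohomotopy Seiberg-Witten invariant. Taking the contrapositive and combining it with the vanishing supplied by Theorem~\ref{sec:proof:thm:definite} yields that the Seiberg-Witten invariant of $X$ vanishes. No extra hypotheses are needed here, so this step is immediate once the theorem is in hand.

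For the second assertion, I would appeal to Taubes' theorem~\cite{Tau94}, which asserts that a closed oriented symplectic 4-manifold with $b_2^+>1$ has a non-vanishing Seiberg-Witten invariant. Applying the contrapositive to $X$, which satisfies $b_2^+>1$ and now has vanishing Seiberg-Witten invariant, shows that $X$ admits no symplectic structure. The one point to check is that the hypotheses line up cleanly: the condition $b_2^+>1$ is precisely what both Theorem~\ref{sec:proof:thm:definite} and Taubes' theorem require, and since $X$ is already oriented no orientation-reversal issue arises. I do not expect a genuine obstacle, as the corollary is a short deduction built entirely on Theorem~\ref{sec:proof:thm:definite} and these two classical inputs.
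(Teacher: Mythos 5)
Your proposal is correct and matches the paper's argument exactly: the paper deduces this corollary from Theorem~\ref{sec:proof:thm:definite} ``as seen from the proofs of Corollaries~\ref{intro:cor:definite:SW} and \ref{intro:cor:definite:symplectic},'' which is precisely the two-step deduction you give (contrapositive of the comparison result from \cite[Proposition~3.3]{BF04} and \cite[Proposition~4.4]{Bau04s}, then the contrapositive of Taubes' theorem~\cite{Tau94}).
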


We begin the proofs of these theorems with the lemma below. For a second homology class $\alpha$ of an oriented 4-manifold $X$, let $\overline{\alpha}$ denote the class $\alpha$ of $\overline{X}$. 
\begin{lemma}\label{sec:proof:lem:slice}Let $X$ be a compact oriented smooth 4-manifold, and let $\alpha$ be a second homology class of $X$ represented by a 2-handle neighborhood. Then the class $\alpha-\overline{\alpha}$ of $H_2(X\#\overline{X};\mathbb{Z})\cong H_2(X;\mathbb{Z})\oplus H_2(\overline{X};\mathbb{Z})$ is represented by a smoothly embedded 2-sphere with the self-intersection number zero. 
\end{lemma}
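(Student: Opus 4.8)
The plan is to represent $\alpha-\overline{\alpha}$ directly by an embedded sphere built from the cores of the two $2$-handles, capped off by a slice disk; the crux is that the orientation reversal together with the minus sign turns the relevant boundary knot into one that is smoothly slice. First I would fix notation for the $2$-handle neighborhood: write $W=B^4\cup h$, where $h$ is a $2$-handle attached along a framed knot $(K,n)\subset S^3=\partial B^4$, so that $H_2(W;\mathbb{Z})\cong\mathbb{Z}$ with intersection form $(n)$. Choosing a Seifert surface $\Sigma$ for $K$ of genus $g$ and pushing its interior into $B^4$ yields $\widehat{\Sigma}\subset B^4$ with $\partial\widehat{\Sigma}=K$; gluing $\widehat{\Sigma}$ to the core disk $c$ of $h$ along $K$ produces a closed genus-$g$ surface $F_X\subset W$ with $[F_X]=\alpha$ and $F_X\cdot F_X=n$. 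The same construction inside $\overline{X}$ gives a surface representing $\overline{\alpha}$; note that reversing the ambient orientation replaces $K$ by its mirror $mK$, with framing $-n$.

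Next I would perform the connected sum $X\#\overline{X}$ in the interiors of the two $4$-balls, so that the two balls merge into $B^4\,\natural\,\overline{B^4}\cong B^4$ with boundary $S^3$ containing the split link $K\sqcup mK$. To represent $\alpha-\overline{\alpha}$ I orient the core $c_{\overline{X}}$ so that it carries $-\overline{\alpha}$; this reverses its boundary, so the two core disks now have boundaries $K$ and $rmK$. Band-summing these two knots by a band in the neck region merges $c_X$ and $c_{\overline{X}}$ into a single disk $D$ (two disks plus one band) with $\partial D=K\mathbin{\#}rmK$. Since $rmK$ is precisely the concordance inverse of $K$, the knot $K\mathbin{\#}rmK$ bounds a smooth slice disk $\Delta$ in the merged $4$-ball $B^4$.

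Finally I would set $S:=D\cup_{K\mathbin{\#}rmK}\Delta$. The interior of $D$ lies in the two handles and the neck while $\Delta$ lies in the ball interior, so $S$ is a smoothly embedded closed surface; being a union of two disks along a common boundary circle, it is a $2$-sphere. Its class is $\alpha-\overline{\alpha}$, because the core disks contribute $\alpha$ and $-\overline{\alpha}$ while the band and slice disk lie in the homologically trivial ball region, and its self-intersection is forced to be $(\alpha-\overline{\alpha})^2=n+(-n)=0$ since $S$ is an embedded representative.

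The main obstacle is the orientation bookkeeping in the middle paragraph: one must verify that the minus sign in $\alpha-\overline{\alpha}$, combined with the orientation reversal $X\rightsquigarrow\overline{X}$, produces exactly the reverse-mirror $rmK$, and hence the slice knot $K\mathbin{\#}rmK$, rather than $K\mathbin{\#}mK$, which need not be slice. This is the heart of the matter, and it doubles as a reassuring consistency check, since the wrong sign $\alpha+\overline{\alpha}$ would have nonzero self-intersection $2n$ and generically fail to be spherical. The remaining points — that the pushed-in Seifert surface realizes the framing $n$, and that the band sum and slice disk can be taken disjoint from the rest of $S$ — are routine and handled by standard general position and handle-calculus arguments.
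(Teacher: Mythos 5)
Your proof is correct and takes essentially the same route as the paper's: both work inside the submanifold $W\natural\overline{W}\subset X\#\overline{X}$ and combine the two 2-handles so as to produce a $0$-framed slice knot capped off by a slice disk pushed into the $4$-ball --- the paper phrases this as a handle slide of the $n$-framed handle over the $(-n)$-framed mirror handle, while you band-sum the two core disks directly, which is the same geometric operation. Your explicit bookkeeping that the resulting knot is $K\#rmK$ (the concordance inverse sum) rather than $K\#mK$ is a point the paper treats only implicitly in its notation $K\#\overline{K}$, so that care is a welcome addition rather than a deviation.
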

\begin{proof}Assume that $\alpha$ is represented by a 2-handle neighborhood $W$ that is obtained from the 4-ball by attaching a 2-handle along an $n$-framed knot $K$. Then $X\#\overline{X}$ contains the boundary connected sum $W\natural \overline{W}$ as a submanifold. Let $\overline{K}$ denote the mirror image of the knot $K$. Clearly $W\natural \overline{W}$ is obtained from the 4-ball by attaching two 2-hanldles along an $n$-framed knot $K$ and a $(-n)$-framed knot $\overline{K}$, where these two framed knots are located in two disjoint 3-balls in $S^3$. By sliding the 2-handle $K$ over $\overline{K}$, we obtain a new 2-handle of $W\natural \overline{W}$ attached along the slice knot $K\# \overline{K}$ with the 0-framing. Clearly $\alpha-\overline{\alpha}$ is represented by this 2-handle neighborhood. Since $K\# \overline{K}$ is a slice knot, and the framing is zero, the class $\alpha-\overline{\alpha}$ is represented by a smoothly embedded 2-sphere with the self-intersection number zero. 
\end{proof}

Let us recall a few basic results about the stable cohomotopy Seiberg-Witten invariant of 4-manifolds~\cite{BF04}, also known as the Bauer-Furuta invariant. As shown in \cite{BF04} and \cite{Bau04s}, this invariant is a refinement of the Seiberg-Witten invariant, and moreover strictly stronger than the Seiberg-Witten invariant. Indeed, the following theorem of Bauer implies that this invariant can distinguish 4-manifolds having the same (vanishing) Seiberg-Witten invariants. 

\begin{theorem}[{Bauer~\cite{Bau04}, see also \cite[Theorem~8.8]{Bau04s}}]\label{sec:proof:thm:Bauer}If a closed connected oriented smooth 4-manifold $X$ satisfies either the condition $(1)$ or $(2)$, then the stable cohomotopy Seiberg-Witten invariant of $X$ does not vanish. 
\begin{enumerate}
 \item $X$ is the connected sum $X_1\# X_2$ of closed connected oriented smooth 4-manifolds $X_1$ with a non-vanishing stable cohomotopy Seiberg-Witten invariant and $X_2$ with $b_2^+(X_2)=0$. 
 \item $X$ is the connected sum $\#_{i=1}^n X_i$ of closed connected oriented smooth 4-manifolds $X_1, X_2,\dots, X_n$ satisfying the following conditions. 
 \begin{itemize}
 \item [\textnormal{(i)}] $b_2^+(X_i)\equiv 3 \pmod{4}$ and $b_1(X_i)=0$ for each $i$. 
 \item [\textnormal{(ii)}] Each $X_i$ admits a $spin^c$ structure $\mathfrak{s}_i$ compatible with an almost complex structure satisfying $SW_{X_i}(\mathfrak{s}_i)\equiv 1\pmod{2}$. 
 \item [\textnormal{(iii)}] $2\leq n \leq 4$. Furthermore, if $n=4$, then $b_2^+(X)\equiv 4\pmod{8}$. 
\end{itemize}
\end{enumerate}
\end{theorem}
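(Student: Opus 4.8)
The plan is to build on the defining property of the stable cohomotopy Seiberg--Witten invariant as an equivariant stable cohomotopy class together with its behaviour under connected sum. Recall that for a closed oriented $X$ with a $\mathrm{spin}^c$ structure $\mathfrak{s}$ the invariant $\mathrm{BF}(X,\mathfrak{s})$ is the stable homotopy class of a finite-dimensional approximation of the monopole map, an element of an equivariant stable cohomotopy group $\{S^{V}, S^{W}\}^{G}$, where $G=S^1$ in general and $G=\mathrm{Pin}(2)$ in the spin case, and where the virtual representation $W-V$ is determined by the index of the Dirac operator and by $b_2^+(X)$ (the latter entering as a sum of copies of the sign representation). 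The crucial structural input is the connected sum formula of Bauer and Furuta: for $X=X_1\#X_2$ one has $\mathrm{BF}(X)\simeq \mathrm{BF}(X_1)\wedge\mathrm{BF}(X_2)$ after the appropriate identification of spheres. Thus in both cases the problem reduces to showing that a smash product of individual invariants is stably essential.

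For case $(1)$, the key observation is that when $b_2^+(X_2)=0$ the sign-representation part of the target sphere for $X_2$ is absent, so the monopole map of $X_2$ is, after finite-dimensional approximation, a $G$-equivariant self-map of a sphere whose degree on the relevant quaternionic summand is $\pm 1$; hence $\mathrm{BF}(X_2)$ is a stable equivalence. Smashing a stable equivalence with the non-trivial class $\mathrm{BF}(X_1)$ preserves non-triviality, so $\mathrm{BF}(X)\neq 0$. I would make the degree computation precise by examining the linearization of the monopole map at the reducible solution.

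For case $(2)$, I would first record that hypotheses (i) and (ii) force each $\mathrm{BF}(X_i)$ to be a specific non-trivial $\mathrm{Pin}(2)$-equivariant class: the condition $b_2^+(X_i)\equiv 3\pmod 4$ makes the virtual index representation a quaternionic multiple, so that $\mathrm{Pin}(2)$ acts and $\mathrm{BF}(X_i)$ refines the mod-$2$ invariant $SW_{X_i}(\mathfrak{s}_i)\equiv 1$; concretely it is detected as the non-trivial element of a $\mathrm{Pin}(2)$-equivariant stable stem. The substance of the theorem is then the claim that the $n$-fold smash product of these classes remains essential for $2\leq n\leq 4$, with the extra constraint $b_2^+(X)\equiv 4\pmod 8$ when $n=4$. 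I would prove this by passing to an equivariant $KO$-theoretic (or $\mathrm{ko}$-theoretic) Hurewicz image, where the smash product becomes a product of quaternionic Hopf-type classes; the period-$8$ structure of real $K$-theory, together with the $4$-dimensional quaternionic nature of the $\mathrm{Pin}(2)$-representation, is exactly what caps the number of admissible factors at four and produces the $\pmod 8$ refinement.

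The main obstacle is this last equivariant homotopy computation. Verifying that the product of $n$ quaternionic Hopf-type classes is non-zero in the relevant $\mathrm{Pin}(2)$-equivariant stable cohomotopy group, and extracting precisely the range $2\leq n\leq 4$ together with the $b_2^+\equiv 4\pmod 8$ condition, is a delicate calculation closely analogous to the constraints coming from the image of the $J$-homomorphism and from Adams' work on vector fields on spheres. Everything else---the connected sum formula, the reduction in case $(1)$, and the identification of each factor in case $(2)$---is comparatively formal once this computation is in hand.
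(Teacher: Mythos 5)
Note first that the paper itself does not prove this statement: it is imported from Bauer~\cite{Bau04} (see also \cite[Theorem~8.8]{Bau04s}), so your proposal has to be measured against Bauer's published proof rather than against anything in this paper. Your top-level skeleton does agree with his: by the connected-sum theorem of \cite{BF04} the invariant of $\#_i X_i$ is the smash product of the invariants of the summands, and the issue is to show the relevant smash products are stably essential. Beyond that, however, both of your detection arguments have genuine gaps. In case (1) you assert that $b_2^+(X_2)=0$ makes the approximated monopole map of $X_2$ ``a $G$-equivariant self-map of a sphere'' of degree $\pm1$, hence a stable equivalence. This is not correct as stated: the domain and target spheres differ by the virtual index of the Dirac operator, so the map is not a self-map unless one first arranges $\mathrm{ind}_{\mathbb{C}}D=0$ by a careful choice of $spin^c$ structure; and there is no ``quaternionic summand'' in sight, since quaternionic ($\mathrm{Pin}(2)$-) structure exists only for self-conjugate (spin) $spin^c$ structures and has nothing to do with $b_2^+=0$. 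What $b_2^+(X_2)=0$ actually gives is that the $S^1$-\emph{fixed-point} part of the monopole map is a linear isomorphism in the limit, i.e.\ the fixed-point map has degree $\pm1$. Promoting this to ``smashing with the invariant of $X_2$ preserves nonvanishing'' is the real content of Bauer's theorem; it needs, for instance, the fact that for $S^1$-maps of spheres of representations with only weights $0$ and $1$ the underlying degree is forced by the fixed-point degree, together with tom Dieck's unit criterion (or an isotropy-separation argument). None of this appears in your sketch.

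In case (2) the framework you propose --- $\mathrm{Pin}(2)$-equivariance, quaternionic Hopf classes, a $KO$-theoretic Hurewicz image --- is not even available at the stated generality: condition (ii) concerns the canonical $spin^c$ structure of an almost complex structure, which is self-conjugate only when $c_1$ is torsion. For instance $X_i=K3\#\overline{\mathbb{C}\mathbb{P}^2}$ satisfies (i) and (ii) but is not spin, so its monopole map carries no $\mathrm{Pin}(2)$-symmetry and your proposed Hurewicz target does not exist; moreover $KO$ could not even detect the $n=3$ case, since $\eta^3\neq 0$ in $\pi_3^s$ but $KO_3=0$. Bauer's actual argument is $S^1$-equivariant throughout: the hypotheses $SW_{X_i}(\mathfrak{s}_i)\equiv 1\pmod 2$ and $b_2^+(X_i)\equiv 3\pmod 4$ (equivalently, $\mathrm{ind}_{\mathbb{C}}D$ even) identify the underlying nonequivariant stable class of each factor with the Hopf map $\eta$, so for $n\leq 3$ nonvanishing already follows from $\eta^n\neq 0$ in $\pi_n^s$; since $\eta^4=0$, the case $n=4$ requires genuinely equivariant detection, which Bauer carries out via the $K$-theoretic mapping degree valued in $R(S^1)$-modules, and it is exactly in that $2$-primary arithmetic that the extra hypothesis $b_2^+(X)\equiv 4\pmod 8$ enters. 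Your proposal explicitly defers this computation (``the main obstacle''), but that computation --- in the correct $S^1$-equivariant category, not the $\mathrm{Pin}(2)$ one --- is the entire substance of the theorem; what remains of your sketch once it is removed is only the formal connected-sum reduction.
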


Furthermore, Ishida and Sasahira~\cite{IS15} extended the sufficient condition (2) to the case $b_1\neq 0$. For interesting examples and applications of these results, the readers can consult, for example, \cite{IL03}, \cite{AIP14}, \cite{BI14}, and \cite{IS17}. 

As is well-known to experts of Seiberg-Witten theory, the adjunction inequality holds for the stable cohomotopy Seiberg-Witten invariant as well (e.g.\ \cite[p.\ 53]{Kr99}, \cite{Sas06}). In particular, the following special case holds. 

\begin{theorem}\label{sec:proof:thm:non-negative}Let $X$ be a closed connected oriented smooth 4-manifold with $b_2^+>1$ having a non-vanishing stable cohomotopy Seiberg-Witten invariant, and let $\alpha$ be a non-torsion second homology class of $X$. If the self-intersection number of $\alpha$ is non-negative, then $\alpha$ cannot be represented by a smoothly embedded 2-sphere. 
\end{theorem}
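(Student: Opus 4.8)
The plan is to derive the statement from the generalized adjunction inequality for the stable cohomotopy Seiberg-Witten invariant, whose validity was recalled just above. Since this invariant is non-vanishing, there is a $\mathrm{spin}^c$ structure $\mathfrak{s}$ for which the invariant is non-trivial; write $K=c_1(\mathfrak{s})$. Arguing by contradiction, I assume that the non-torsion class $\alpha$ with $\alpha\cdot\alpha=m\geq 0$ is represented by a smoothly embedded $2$-sphere $S$. The genus $\geq 1$ adjunction inequality does not apply to $S$ directly, so the strategy is to manufacture connected surfaces of arbitrarily large genus in a fixed homology direction, arranged so that the right-hand side of the adjunction inequality grows in the number of copies with a larger slope than the left-hand side.

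First I would reduce to the case $m=0$ by blowing up. Blowing up a point of $S$ replaces $S$ by its proper transform, an embedded sphere of self-intersection $m-1$ representing $\alpha-E$, where $E$ is the exceptional class; iterating $m$ times produces, in $X\#m\overline{\mathbb{C}\mathbb{P}^2}$, an embedded sphere of self-intersection $0$ representing the non-torsion class $\alpha-E_1-\cdots-E_m$. Since $b_2^+(\overline{\mathbb{C}\mathbb{P}^2})=0$, Theorem~\ref{sec:proof:thm:Bauer}(1) guarantees that $X\#m\overline{\mathbb{C}\mathbb{P}^2}$ still has a non-vanishing stable cohomotopy Seiberg-Witten invariant, hence still admits a basic class, while $b_2^+>1$ is unchanged. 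Thus I may assume from the outset that $m=0$.

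Next comes the main construction. As $\alpha$ is non-torsion and the intersection form is non-degenerate on the free part of $H_2$, there is a class $\beta$ with $d:=\alpha\cdot\beta\neq 0$; after replacing $\beta$ by $2\beta$ if necessary, and reversing its sign if necessary, I may assume $d\geq 2$. I then represent $\beta$ by a smoothly embedded oriented surface $F$ of genus $h$ meeting $S$ transversally in exactly $d$ points of the same sign. Since $S$ has trivial normal bundle, I take $n$ disjoint parallel copies $S_1,\dots,S_n$ of $S$, each meeting $F$ in $d$ points, and resolve every intersection point by an oriented tube. This yields a connected embedded oriented surface $\Sigma_n$ representing $n\alpha+\beta$, with $[\Sigma_n]\cdot[\Sigma_n]=2nd+\beta^2$ and genus $g_n=h+n(d-1)$; in particular $g_n\geq 1$ and $[\Sigma_n]\cdot[\Sigma_n]\geq 0$ once $n$ is large.

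Finally I would apply the adjunction inequality to $\Sigma_n$, obtaining $2g_n-2\geq [\Sigma_n]\cdot[\Sigma_n]+|K\cdot[\Sigma_n]|\geq 2nd+\beta^2$. Substituting $g_n=h+n(d-1)$ collapses this to $-2n\geq \beta^2-2h+2$, whose right-hand side is independent of $n$; this is impossible for $n$ large, giving the desired contradiction. The step I expect to be the main obstacle is the bookkeeping in the tubing construction—checking that resolving all $nd$ intersection points yields a connected surface of genus exactly $h+n(d-1)$, and in particular of genus at least $1$ (which is why I arrange $d\geq 2$)—together with confirming that blowing up preserves non-vanishing of this particular invariant; the latter is precisely what Theorem~\ref{sec:proof:thm:Bauer}(1) supplies.
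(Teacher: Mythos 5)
Your proof is correct, but it takes a genuinely different route from the paper's. The two arguments share only the opening move: blow up $\alpha\cdot\alpha$ times and invoke Theorem~\ref{sec:proof:thm:Bauer}(1) to reduce to a self-intersection-zero embedded sphere $S$ representing a non-torsion class in a manifold $Z$ whose invariant is still non-vanishing. From there the paper makes no use of any adjunction inequality: it takes the boundary $Y=\partial\nu(S)\cong S^2\times S^1$ of the tubular neighborhood, notes that $Y$ admits a positive scalar curvature metric and that the restriction $H^2(Z;\mathbb{Q})\to H^2(Y;\mathbb{Q})$ is non-zero because $[S]$ is non-torsion, and concludes from Fr\o yshov's theorem (Theorem~\ref{sec:proof:thm:Froyshov}) that the invariant of $Z$ vanishes, a contradiction. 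You instead take as input the genus~$\geq 1$ adjunction inequality for the stable cohomotopy invariant---which the paper recalls only informally as ``well-known to experts'' (\cite{Kr99}, \cite{Sas06}) and pointedly does not rely on, supplying the Fr\o yshov-based proof ``for completeness''---and run the classical stabilization trick: tubing $n$ parallel copies of $S$ into a geometrically dual surface $F$ makes $2g_n$ grow with slope $2(d-1)$ in $n$ while the self-intersection grows with slope $2d$, contradicting the inequality for large $n$. Your bookkeeping is right: the Euler characteristic drops by $2$ per resolved same-sign intersection point, giving $g_n=h+n(d-1)$, and $\Sigma_n$ is connected since every copy of $S$ is tubed to $F$. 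Two points you should make explicit: (i) arranging that $F$ meets $S$ in exactly $d$ points, all of the same sign, requires removing excess opposite-sign pairs by tubing $F$ to itself along arcs in $S$ (the Norman trick), which raises $h$ but is harmless since $h$ only enters as a constant; (ii) the adjunction inequality also requires $[\Sigma_n]$ to be non-torsion, which holds because $[\Sigma_n]\cdot\alpha=d\neq 0$. As for what each approach buys: the paper's proof rests on a precisely stated, citable theorem and needs almost no geometric work, whereas yours avoids Fr\o yshov's theorem entirely but leans on the general adjunction inequality for the Bauer--Furuta invariant, a statement the paper never formulates; on the other hand, your reduction of the sphere case to the genus~$\geq 1$ case is exactly the ``standard argument'' that justifies the paper's claim that Theorem~\ref{sec:proof:thm:non-negative} is a special case of the adjunction inequality.
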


This theorem follows, for example, from the theorem below. 
\begin{theorem}[{Fr\o yshov~\cite[Theorem~1.1]{Fr05}}]\label{sec:proof:thm:Froyshov}
Let $X$ be a closed connected oriented smooth 4-manifold with $b_2^+>1$. Suppose that a closed orientable codimension one submanifold $Y$ of $X$ satisfies the following two conditions. 
\begin{itemize}
 \item $Y$ admits a Riemannian metric with positive scalar curvature. 
 \item The inclusion induced homomorphism $H^2(X;\mathbb{Q})\to H^2(Y;\mathbb{Q})$ is non-zero. 
\end{itemize}
Then the stable cohomotopy Seiberg-Witten invariant of $X$ vanishes. 
\end{theorem}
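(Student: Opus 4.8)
The plan is to run a neck-stretching argument on the Bauer--Furuta monopole map and to exploit the positive scalar curvature of $Y$ together with the cohomology hypothesis to trivialize the resulting equivariant map. First I would choose a family of Riemannian metrics $g_T$ on $X$ containing a long cylindrical neck isometric to $\bigl(Y\times[-T,T],\,g_Y+dt^2\bigr)$, where $g_Y$ has positive scalar curvature and the metric is held fixed away from the neck. For each $T$ the Seiberg--Witten monopole map admits a finite-dimensional approximation, yielding an $S^1$-equivariant based map
\[
f_T\colon (V_T^+)^+\longrightarrow (V_T^-)^+
\]
between one-point compactifications of finite-dimensional approximations of the index of the linearization, whose stable $S^1$-equivariant homotopy class is by definition the stable cohomotopy Seiberg--Witten invariant and is independent of $T$. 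It therefore suffices to show that $f_T$ is stably equivariantly null-homotopic once $T$ is large.

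The analytic input is the Weitzenb\"{o}ck--Lichnerowicz formula on the neck. Since $g_Y$ has positive scalar curvature, any bounded-energy configuration has its spinor driven pointwise to zero along $Y\times[-T,T]$ as $T\to\infty$, so the only translation-invariant solutions on the cylinder $Y\times\mathbb{R}$ are the reducibles, namely the flat $\mathrm{spin}^c$ connections, which form the torus $H^1(Y;\mathbb{R})/H^1(Y;\mathbb{Z})$. Moreover positive scalar curvature gives the cylindrical Dirac operator a spectral gap, hence no $L^2$-kernel on the neck, so the contribution of the neck region to $f_T$ is, up to homotopy, an invertible linear factor that may be split off from both domain and target. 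In this way the long-neck limit of $f_T$ is homotopic to a map whose nonlinearity and whose interaction with the interior are supported near the reducible locus associated with the flat-connection torus of $Y$.

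The cohomology hypothesis is exactly what forces the remaining piece to be trivial, and it is here that the argument is genuinely finer than the classical Seiberg--Witten vanishing. Because $H^2(X;\mathbb{Q})\to H^2(Y;\mathbb{Q})$ is non-zero we have $b_1(Y)>0$ and, more importantly, a non-zero flux class, so the flat-connection torus of $Y$ is positive-dimensional and is seen non-trivially by the ambient periods of $X$. This degeneracy is precisely what is needed to deform the split-off map so that it factors through a proper equivariant subcomplex, yielding the desired stable equivariant null-homotopy and hence the vanishing of the invariant. The sharpness of the hypothesis is illustrated by the case $Y=S^3$, which carries positive scalar curvature but has $H^2(S^3;\mathbb{Q})=0$: there the restriction map is zero, the conclusion fails, and indeed connected sums, which contain such $S^3$-necks, can have non-vanishing stable cohomotopy Seiberg--Witten invariants by Theorem~\ref{sec:proof:thm:Bauer}(2).

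I expect the main obstacle to be the uniform analytic control of the finite-dimensional approximation as $T\to\infty$: one must establish the exponential decay and convergence estimates on the lengthening neck, track the spectral flow of the linearized operator so that the identification of $f_T$ with the invariant persists, and verify that the flux-induced degeneracy of the reducible torus produces a genuinely $S^1$-equivariant factorization that survives stabilization, rather than a merely rational or non-equivariant one. Converting this geometric degeneracy into an honest equivariant null-homotopy, compatible with the suspension spectrum defining the Bauer--Furuta invariant, is the technical heart of the proof.
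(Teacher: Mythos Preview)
This theorem is not proved in the paper; it is quoted from Fr\o yshov~\cite[Theorem~1.1]{Fr05} and invoked as a black box in the proof of Theorem~\ref{sec:proof:thm:non-negative}. There is therefore no proof in the paper to compare your sketch against.

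Regarding the sketch itself: the neck-stretching strategy and the role of positive scalar curvature are correctly identified, and your final paragraph is right that the serious analysis lies in making the stretching uniform for the finite-dimensional approximation. However, your account of how the hypothesis $H^2(X;\mathbb{Q})\to H^2(Y;\mathbb{Q})\neq 0$ enters is not quite the actual mechanism. The point is not that a positive-dimensional reducible torus on $Y$ produces a ``degeneracy'' through which the map factors. Rather, the hypothesis allows one to choose a closed $2$-form $\eta$ on $X$ with $[\eta|_Y]\neq 0$ and to perturb the monopole equations by $i\eta$. On $Y$, positive scalar curvature already kills the irreducible solutions, and the perturbation now kills the reducibles as well, since a reducible would force the curvature of the connection into a cohomology class incompatible with the perturbation. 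Thus the perturbed equations have no solutions whatsoever on $Y$. A neck-stretching compactness argument then gives no solutions on $X$ for $T$ large, and since a monopole map with empty solution set misses the origin in its target and is therefore equivariantly null-homotopic, the invariant vanishes. The substantial content of \cite{Fr05} is in establishing the uniform estimates that make this last step rigorous at the level of the finite-dimensional approximations defining the Bauer--Furuta class.
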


Although Theorem~\ref{sec:proof:thm:non-negative} follows from the above theorem by a standard argument, we include a proof for completeness. See also a recent preprint \cite{KLS} for an alternative proof that uses relative Bauer-Furuta invariants. 

\begin{proof}[Proof of Theorem~\ref{sec:proof:thm:non-negative}]Let $n\geq 0$ be the self-intersection number of $\alpha$, and let $Z$ be the 4-manifold $X\#_n\overline{\mathbb{C}\mathbb{P}^2}$. We note that $Z$ has a non-vanishing stable cohomotopy Seiberg-Witten invariant by Theorem~\ref{sec:proof:thm:Bauer}. 

Now suppose, to the contrary, that $\alpha$ is represented by a smoothly embedded 2-sphere in $X$. Then, by blowing up, one can construct a smoothly embedded 2-sphere $S$ in $Z$ with the self-intersection number zero that represents a non-torsion second homology class. 
Let $Y$ denote the boundary of the tubular neighborhood $\nu(S)(\cong S^2\times D^2)$ of $S$ in $Z$. We note that $Y$ is diffeomorphic to $S^2\times S^1$, and thus admits a Riemannian metric with positive scalar curvature. 
Since $S$ represents a non-torsion second homology class, we see that the inclusion induced homomorphism $H^2(Z;\mathbb{Q})\to H^2(\nu(S);\mathbb{Q})\cong \mathbb{Q}$ is non-zero. Composing this map with the inclusion induced homomorphism $H^2(\nu(S);\mathbb{Q})\to H^2(Y;\mathbb{Q})\cong \mathbb{Q}$, one can check that the inclusion induced homomorphism $H^2(Z;\mathbb{Q})\to H^2(Y;\mathbb{Q})\cong \mathbb{Q}$ is non-zero. Therefore Theorem~\ref{sec:proof:thm:Froyshov} shows that the stable cohomotopy Seiberg-Witten invariant of $Z$ vanishes, giving a contradiction. 
\end{proof}

We can now easily prove Theorems~\ref{sec:proof:thm:definite} and \ref{sec:proof:thm:reverse}. 

\begin{proof}[Proof of Theorem~\ref{sec:proof:thm:definite}]Let $X$ be a closed connected positive definite smooth 4-manifold with $b_2^+>1$, and assume that a non-torsion second homology class $\alpha$ of $X$ is represented by a 2-handle neighborhood. Suppose, to the contrary, that the stable cohomotopy Seiberg-Witten invariant of $X$ does not vanish. Since the intersection form of $\overline{X}$ is negative definite, the stable cohomotopy Seiberg-Witten invariant of $X\#\overline{X}$ does not vanish by Theorem~\ref{sec:proof:thm:Bauer}. Hence by Theorem~\ref{sec:proof:thm:non-negative}, $X\#\overline{X}$ does not contain a smoothly embedded 2-sphere with the self-intersection number zero representing a non-torsion second homology class.  On the other hand, by Lemma~\ref{sec:proof:lem:slice}, the non-torsion class $\alpha-\overline{\alpha}$ is represented by such a 2-sphere in $X\#\overline{X}$, giving a contradiction.
\end{proof}

\begin{proof}[Proof of Theorem~\ref{sec:proof:thm:reverse}]Let $X$ be a closed connected oriented smooth 4-manifold satisfying $b_2^+\not\equiv 1$, $b_2^-\not\equiv 1\pmod{4}$ and $b_1=0$, and assume that $X$ admits a non-torsion second homology class $\alpha$ represented by a 2-handle neighborhood. In the case where either $b_2^+$ or $b_2^-$ is an even integer, the claim immediately follows from the well-known fact that $b_2^+-b_1$ of a closed symplectic 4-manifold is odd (see \cite[Corollary~10.1.10]{GS}). We thus consider the case where $b_2^+\equiv b_2^-\equiv 3\pmod{4}$. We may assume that $X$ admits a symplectic structure. Suppose, to the contrary, that $\overline{X}$ also admits a symplectic structure. Then by a result of Taubes~\cite{Tau94}, both $X$ and $\overline{X}$ satisfies the condition (2)(ii) of Theorem~\ref{sec:proof:thm:Bauer}. Since $b_2^+(X)\equiv b_2^+(\overline{X})\equiv 3\pmod{4}$, Theorem~\ref{sec:proof:thm:Bauer} thus shows that the stable cohomotopy invariant of $X\#\overline{X}$ does not vanish. Hence by Theorem~\ref{sec:proof:thm:non-negative}, $X\#\overline{X}$ does not contain a smoothly embedded 2-sphere  with the self-intersection number zero representing a non-torsion second homology class. On the other hand, by Lemma~\ref{sec:proof:lem:slice}, the non-torsion class $\alpha-\overline{\alpha}$ of $X\#\overline{X}$ is represented by such a 2-sphere, giving a contradiction. 
\end{proof}

Theorems~\ref{intro:thm:definite:stable} and \ref{intro:thm:reverse} easily follow from Theorems~\ref{sec:proof:thm:definite} and \ref{sec:proof:thm:reverse}. 

\begin{proof}[Proof of Theorems~\ref{intro:thm:definite:stable} and \ref{intro:thm:reverse}]We note that, for any compact 4-dimensional handlebody with $b_2\neq 0$ and without 1-handles, the handlebody has a 2-handle representing a non-torsion second homology class, since the second homology group is generated by 2-handles. Theorem~\ref{intro:thm:definite:stable} thus follows from Theorem~\ref{sec:proof:thm:definite}. For Theorem~\ref{intro:thm:reverse}, we may assume $b_2\neq 0$, since any simply connected closed 4-manifold with $b_2=0$ does not admit a symplectic structure.  Theorem~\ref{intro:thm:reverse} thus follows from Theorem~\ref{sec:proof:thm:reverse}. 
\end{proof}

\begin{remark} We can prove Corollary~\ref{intro:cor:definite:SW} and, more generally, the $b_1(X)=0$ case of Corollary~\ref{sec:proof:cor:definite:SW} (and hence also Corollaries~\ref{intro:cor:definite:symplectic} and \ref{intro:cor:definite:1- and 3-handles}) without using the stable cohomotopy Seiberg-Witten invariant. Indeed, as seen from the proof of Theorem~\ref{intro:thm:definite:stable}, these corollaries can be shown by using the blow-up formula (\cite{FS95}, \cite[Proposition 2]{KMT95}, \cite[Corollary~14.1.1]{Fr08}) and the adjunction inequality (\cite{KM94}, \cite{MST}, \cite{FS95}, see also \cite[Theorem 2.4.8]{GS}) for the Seiberg-Witten invariant together with Lemma~\ref{sec:proof:lem:slice}. Note that the blow-up formula holds for a connected sum with an arbitrary closed negative definite 4-manifold satisfying $b_1=0$ (\cite[Proposition 2]{KMT95}). 
\end{remark}
\begin{remark}
(1) Problem 4.18 in the Kirby's problem list \cite{Kir78} asks not only Problem~\ref{intro:problem:1-handle} in this paper but also the stronger problem of whether every simply connected closed oriented smooth 4-manifold admits a handle decomposition without 1- and 3-handles. Indeed, many 4-manifolds were shown to admit such handle decompositions (e.g.\ references mentioned in Section~\ref{sec:intro}). Also, Rasmussen's paper \cite{Ra} states the vanishing of the Ozsv\'{a}th-Szab\'{o} 4-manifold invariants for homotopy $S^2\times S^2$'s without 1- and 3-handles. \medskip\\
(2) For simply connected closed 4-manifolds having handle decompositions without 1- and 3-handles, the proofs of Theorems~\ref{intro:thm:definite:stable}, \ref{intro:thm:reverse} and Corollary~\ref{intro:cor:definite:SW} can be simplified using the following fact: for any simply connected closed oriented smooth 4-manifold $X$ without 1- and 3-handles, $X\#\overline{X}$ is diffeomorphic to either $\#_n S^2\times S^2$ or $\#_n(\mathbb{CP}^2\#\overline{\mathbb{C}\mathbb{P}^2})$, where $n=b_2(X)$ (see \cite[Corollary 5.1.6]{GS}). 
\end{remark}

\section{Questions}
Finally we discuss two more questions, motivated by Problem~\ref{intro:problem:1-handle} and our results. We note the following lemma. 

\begin{lemma}\label{sec:question:lem:geometrically}If $X$ is a geometrically simply connected compact oriented smooth 4-manifold, then every second homology class of $X$ is represented by a 2-handle neighborhood. 
\end{lemma}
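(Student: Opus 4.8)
The plan is to read a generating set of $H_2(X;\mathbb{Z})$ off the 2-handles of a handle decomposition without 1-handles, and then to realize an arbitrary integral combination of these generators by a single embedded disk obtained by band-summing parallel copies of the cores of the 2-handles. First I would fix such a handle decomposition. Since $X$ is connected and geometrically simply connected, I may take a single 0-handle $B^4$, and I denote the attaching framed link of the 2-handles by $K_1\cup\cdots\cup K_k\subset S^3=\partial B^4$. Because there are no 1-handles, the handle chain complex has $C_1=0$, so $H_2(X;\mathbb{Z})$ is generated by the classes $\alpha_1,\dots,\alpha_k$ of the 2-handles; here $\alpha_i$ is represented by the 2-handle neighborhood $W_i=B^4\cup_{K_i}h_i$ and equals the image of a generator of $H_2(W_i;\mathbb{Z})$. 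Thus every class $\alpha\in H_2(X;\mathbb{Z})$ can be written as $\alpha=\sum_i c_i\alpha_i$ for some integers $c_i$.

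Given such an expression, I would construct a single 2-handle neighborhood representing $\alpha$ as follows. For each $i$, take $|c_i|$ parallel push-offs of the core disk $D_i$ of the 2-handle $h_i$, using the handle framing, and orient them according to the sign of $c_i$. These $\sum_i |c_i|$ disks are pairwise disjoint and properly embedded in $X$ with boundary a link in $S^3=\partial B^4$. Choosing disjoint bands in a collar of $S^3$, I band-sum all of them into a single connected, properly embedded disk $D\subset X$ with $\partial D=K$ a knot in $S^3$. Let $W$ be the union of the 0-handle $B^4$ with a regular neighborhood $\nu(D)$. Then $W$ is diffeomorphic to $B^4\cup_K h$, i.e.\ a 2-handle neighborhood, and the image of a generator of $H_2(W;\mathbb{Z})$ is $\sum_i c_i\alpha_i=\alpha$ by construction.

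The framing of $K$, equivalently the self-intersection number of the generator, is forced to equal $\alpha\cdot\alpha$, so it agrees automatically with the homological data and imposes no constraint, since the definition of a 2-handle neighborhood allows an arbitrary framing. The main point that needs care is the geometric step: one must check that the parallel push-offs of the cores and the connecting bands can be chosen pairwise disjoint and embedded, so that the band sum $D$ is a single embedded disk whose regular neighborhood, together with the 0-handle, is genuinely a single 2-handle neighborhood rather than a more complicated handlebody. I expect this to be the only real obstacle, and it is routine once the bands are placed in disjoint regions of a collar of $S^3$. As a sanity check via the remark following the definition, capping $D$ off by the PL cone on $K$ from the center of $B^4$ produces a PL embedded 2-sphere representing $\alpha$, which is exactly the equivalent condition recorded there.
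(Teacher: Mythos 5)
Your proof is correct and is essentially the paper's argument in geometric form: the paper introduces a cancelling 2-/3-handle pair and slides the new (null-homologous) 2-handle over the original 2-handles, and such a sequence of handle slides produces exactly the band sum of framed parallel push-offs of the cores that you construct by hand. The only real difference is cosmetic: the Kirby-calculus formulation makes the embeddedness of the resulting disk and the fact that one obtains a genuine 2-handle neighborhood automatic, which is precisely the step you flagged as needing care in your construction.
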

\begin{proof}We fix a handle decomposition of $X$ having no 1-handles, and consider the 2-chain group generated by 2-handles of the decomposition.  Let $\alpha$ be a second homology class  of $X$. Then $\alpha$  is represented by a linear combination of 2-handles. By introducing a cancelling pair of 2- and 3-handles, and sliding the newly introduced 2-handle over the original 2-hanldes, one can construct a 2-handle that is homologous to the linear combination, showing that $\alpha$ is represented by this 2-handle neighborhood. Note that the newly introduced 2-handle represents the zero element in the second homology group, and each handle slide corresponds to an addition or subtraction in the 2-chain group. 
\end{proof}

Now, it would be natural to ask the following questions. 

\begin{question}(1) Does every simply connected closed oriented smooth 4-manifold with $b_2\neq 0$ admit a non-zero second homology class represented by a 2-handle neighborhood?\medskip\\
(2) For any simply connected closed oriented smooth 4-manifold, is every second homology class represented by a 2-handle neighborhood?
\end{question}

If the answer to the question (1) is affirmative, then Theorems~\ref{intro:thm:definite:stable} and \ref{intro:thm:reverse} hold even in the case where the condition ``geometrically simply connected'' is replaced with ``simply connected'', as seen from the proofs. If the answer to the stronger question (2) is negative, then by Lemma~\ref{sec:question:lem:geometrically}, there exists a counterexample to Problem~\ref{intro:problem:1-handle}. We note that the answer to the question (1) is negative, if the simply connected condition is removed. Indeed, the product of two closed oriented surfaces of positive genera is a non-simply connected counterexample, since this 4-manifold satisfies $b_2\neq 0$ and $\pi_2=0$, but a second homology class given by a 2-handle neighborhood must be represented by an immersed 2-sphere. Of course, this argument does not work for simply connected 4-manifolds. 

In \cite{Y_stably}, we will answer the question (1) negatively for simply connected non-closed 4-manifolds, namely, we will show that there exists a simply connected compact oriented smooth 4-manifold that does not admit a non-zero second homology class represented by a 2-handle  neighborhood (and hence by a PL embedded 2-sphere). In fact, we will produce many such examples including those homotopy equivalent to $S^2$. Moreover, we will show that this property does depend on the choice of smooth structures of such a 4-manifold. We will prove these results by applying ideas of this paper to new type of exotic 4-manifolds constructed in \cite{Y_stably}. 

\subsection*{Acknowledgements}The author is grateful to Nobuhiro Nakamura and Hirofumi Sasahira for many helpful conversations and comments about the stable cohomotopy Seiberg-Witten invariant. The author would like to thank R.\  \.{I}nan\c{c} Baykur, Mikio Furuta and Hokuto Konno for useful comments. The author would also like to thank the referee for many helpful comments which improved the presentation of this paper. 
The author was partially supported by JSPS KAKENHI Grant Numbers 16K17593, 26287013 and 17K05220. 

\end{document}